\documentclass[a4paper,12pt]{article}
\usepackage{lineno}
\usepackage{hyperref}
\usepackage{mathrsfs}
\usepackage{amssymb}
\usepackage{amsmath}
\usepackage{amsfonts}
\usepackage{amsthm}
\usepackage{tikz-cd}
\usepackage{enumerate}

\theoremstyle{definition}
\newtheorem{defn}{Definition}[section]

\newtheorem{theorem}[defn]{Theorem}
\newtheorem{proposition}[defn]{Proposition}

\numberwithin{equation}{section}

\begin{document}

\title{A general Mayer-Vietoris sequence in algebraic $K$-theory}  

\author{
\textbf{Yakun Zhang}\\
\small \emph{School of Mathematics, Nanjing Audit University,}\\
\small \emph{Nanjing 211815, China}\\
\small \emph{E-mail: zhangyakun@nau.edu.cn}\\
}
\date{}
\maketitle

\noindent \textbf{Abstract.} 
This paper investigates the Mayer-Vietoris sequence for the Milnor square. While such sequences often involve elusive intermediate terms, we provide an explicit characterization of the key group $X$ in a new, more general variant of the sequence. By identifying $X$ as a categorical pullback, we provide a full, constructive proof of the modified Mayer-Vietoris sequence. Furthermore, we show that $X$ fits into a structural exact sequence involving the relative $K$-groups $K_{*}(A, B, I)$. Finally, we provide a homotopy-theoretic description of $X$ as the homotopy group of a suitable fiber, clarifying its structure, kernel , and image.

\noindent \textbf{2020 Mathematics Subject Classification:} 19C20, 19D55, 16S34, 13N05.

\noindent \textbf{Keywords:} algebraic $K$-theory, Mayer-Vietoris sequence, pullback, pushout.

\section{Introduction}
Consider a Milnor square of associative rings
\begin{equation}\label{milnor}
\begin{tikzcd} 
A \arrow[r, "f"] \arrow[d] &  B \arrow[d]  \\
A / I \arrow[r, "\bar{f}"] &  B / I
\end{tikzcd}	
\end{equation}
where $I$ is a two-sided ideal of $A$ that is mapped isomorphically onto a two-sided ideal of $B$ under the ring homomorphism $f$, and the vertical maps are the canonical projections. This pullback square induces a commutative diagram of algebraic $K$-groups:
$$
\begin{tikzcd}
    K_{i+1}(A) \arrow[r] \arrow[d] & K_{i+1}(A/I) \arrow[r] \arrow[d] & K_i(A, I) \arrow[r, "\alpha_{i}"] \arrow[d, "\varepsilon_i"] & K_i(A) \arrow[r] \arrow[d] & K_i(A/I) \arrow[d] \\
    K_{i+1}(B) \arrow[r] & K_{i+1}(B/I) \arrow[r, "\beta_{i}"] & K_i(B, I) \arrow[r] & K_i(B) \arrow[r] & K_i(B/I)
\end{tikzcd}
$$
Here, $\varepsilon_i: K_{i}(A, I)\rightarrow K_{i}(B, I)$ denotes the $i$-th homomorphism of relative $K$-groups, $\mathrm{ker}(\varepsilon_i)$ is called the $i$-th excision kernel. Since $\varepsilon_i$ is not always an isomorphism, a Mayer-Vietoris-type long exact sequence cannot be derived directly from this diagram. To address this, Weibel \cite[Proposition 5.1]{1983K1} introduced a modified Mayer-Vietoris sequence for all $i\geq 0$:
\begin{equation} \label{mv1}
\cdots \to K_{i+1}(A/I) \oplus K_{i+1}(B) \rightarrow \mathrm{sub}\text{-}K_{i+1}(B/I) \rightarrow \mathrm{quo}\text{-}K_{i}(A) \rightarrow K_{i}(A/I) \oplus K_{i}(B) \to \cdots	
\end{equation}
where $\mathrm{sub}\text{-}K_{i+1}(B/I)$ denotes the subgroup of $K_{i+1}(B/I)$ consisting of elements mapping to $0$ under $K_{i+1}(B/I) \rightarrow K_{i-1}(A, B, I)$, and $\mathrm{quo}\text{-}K_{i}(A)$ is the quotient of $K_i(A)$ by the image of $K_{i}(A, B, I)$. However, he did not provide a complete and rigorous proof, indicating the argument only via commutative diagrams and diagram chasing.

In this paper, we re-examine these two groups from a categorical perspective. Specifically, $\mathrm{quo}\text{-}K_{i}(A) = K_{i}(A)/\alpha_{i}\bigl(\ker(\varepsilon_i)\bigr)$ can be characterized as the pushout:
\begin{equation}\label{pushout}
\begin{tikzcd}
 K_{i}(A, I) \arrow[r, "\alpha_{i}"] \arrow[d, two heads, "\pi_1"] &  K_{i}(A) \arrow[d, "\pi_2"]  \\
K_{i}(A, I)/\mathrm{ker}(\varepsilon_i) \arrow[r] &  \mathrm{quo}\text{-}K_{i}(A)   
\end{tikzcd}	
\end{equation}
Correspondingly, $\mathrm{sub}\text{-}K_{i+1}(B/I)  = \{x\in K_{i+1}(B/I)\mid \beta_{i}(x)\in \mathrm{Im } \varepsilon_i\}$ is the pullback:

\begin{equation}\label{pullback}
\begin{tikzcd}
   \mathrm{sub}\text{-}K_{i+1}(B/I) \arrow[r] \arrow[d, hook, "i_1"] & K_{i}(A, I)/\mathrm{ker}(\varepsilon_i)   \arrow[d, hook, "i_2"] \\
K_{i+1}(B/I)  \arrow[r, "\beta_{i}"] & K_{i}(B, I) 
\end{tikzcd}	
\end{equation}

The main contributions of this paper are as follows. First, we re-examine Weibel's result and provide a full, constructive proof of his variant of the Mayer-Vietoris sequence. Building upon this foundation, we further refine the construction to obtain a new, more general variant of the sequence. Finally, we employ homotopy-theoretic methods to provide a precise characterization of the mysterious third term in our new sequence. By identifying it with the homotopy group of a suitable homotopy fiber, we derive a new exact sequence that thoroughly clarifies its structure, kernel, and image.

\section{Generalized Mayer-Vietoris sequence}

Throughout this section, we maintain the notation established in the introduction regarding the Milnor square \eqref{milnor}. Specifically, the groups $\mathrm{quo}\text{-}K_{i}(A)$ and $\mathrm{sub}\text{-}K_{i+1}(B/I)$ are understood as the pushout \eqref{pushout} and pullback \eqref{pullback} constructions, respectively.

To prove our main results, we present two key propositions that underlie the following constructions. Their proofs rely only on standard techniques in homological algebra involving exact sequences, pullbacks, and pushouts. Since the arguments are routine and straightforward, we omit the detailed proofs and leave them to the reader. These propositions will be used to provide a rigorous diagram-theoretic proof for Weibel's Mayer-Vietoris sequence \eqref{mv1}. Building upon this foundation, we present our main result: a more general variant of the Mayer-Vietoris sequence that clarifies the internal structure of these terms.

In the following, a sequence of $R$-modules $A \to B \to C \to D$ is said to be exact if it is exact at $B$ and $C$, i.e., the image of each morphism is equal to the kernel of the succeeding one.

\begin{proposition}[\textbf{Stability of Exactness}] \label{exact1}
Let $R$ be a commutative ring and $A \xrightarrow{f} B \xrightarrow{g} C \xrightarrow{h} D$ be an exact sequence of $R$-modules.
\begin{enumerate}[(a)]
    \item Pullback Case: Let $i_2: C_1 \hookrightarrow C$ be a submodule inclusion. Let $B_1$ be the pullback of $g$ and $i_2$, with induced maps $i_1: B_1 \hookrightarrow B$ and $g_1: B_1 \to C_1$. Let $f_1: A \to B_1$ be the unique map with $i_1 \circ f_1 = f$, and $h_1 = h \circ i_2$. Then the following diagram commutes, and the top row is exact:
    $$
    \begin{tikzcd}
    A \arrow[r, "f_1"] \arrow[d, equal] & B_1 \arrow[r, "g_1"] \arrow[d, hook, "i_1"] & C_1 \arrow[r, "h_1"] \arrow[d, hook, "i_2"] & D \arrow[d, equal] \\
    A \arrow[r, "f"] & B \arrow[r, "g"] & C \arrow[r, "h"] & D
    \end{tikzcd}
    $$
    \item Pushout Case: Let $\pi_1: B \twoheadrightarrow B_2$ be a surjective homomorphism. Let $C_2$ be the pushout of $g$ and $\pi_1$, with induced maps $\pi_2: C \to C_2$ and $g_2: B_2 \to C_2$. Let $f_2 = \pi_1 \circ f$, and $h_2: C_2 \to D$ be the unique map with $h_2 \circ \pi_2 = h$. Then the following diagram commutes, and the bottom row is exact:
    $$
    \begin{tikzcd}
    A \arrow[r, "f"] \arrow[d, equal] & B \arrow[r, "g"] \arrow[d, two heads, "\pi_1"] & C \arrow[r, "h"] \arrow[d, "\pi_2"] & D \arrow[d, equal] \\
    A \arrow[r, "f_2"] & B_2 \arrow[r, "g_2"] & C_2 \arrow[r, "h_2"] & D
    \end{tikzcd}
    $$
\end{enumerate}
\end{proposition}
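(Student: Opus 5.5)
The plan is to replace both universal constructions by explicit submodule and quotient descriptions. Once that is done, exactness reduces to a direct element chase in each case. Throughout, I use only exactness of $A \xrightarrow{f} B \xrightarrow{g} C \xrightarrow{h} D$ at $B$ and $C$, namely $\ker g = \mathrm{Im}\, f$ and $\ker h = \mathrm{Im}\, g$.

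\emph{Pullback case.} I identify the pullback of $g$ and $i_2$ with $B_1 = g^{-1}(C_1) \subseteq B$. Here $i_1$ is the inclusion and $g_1$ is the restriction of $g$, which is legitimate because $i_2$ is injective. Since $g f = 0 \in C_1$, the map $f$ lands in $B_1$, so $f_1$ is just $f$ with restricted codomain. The universal property gives this directly, and uniqueness of $f_1$ holds because $i_1$ is a monomorphism. Commutativity of the diagram is then immediate: $i_1 f_1 = f$, $i_2 g_1 = g i_1$, and $h_1 = h i_2$ by definition.

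\begin{itemize}
\item Exactness at $B_1$: we have $\ker g_1 = \ker g \cap B_1 = \ker g = \mathrm{Im}\, f = \mathrm{Im}\, f_1$, using $\ker g \subseteq B_1$.
\item Exactness at $C_1$: first, $h_1 g_1 = h g i_1 = 0$. Conversely, if $c \in C_1$ with $h(c) = 0$, then $c = g(b)$ for some $b \in B$. Since $g(b) \in C_1$, we get $b \in B_1$ and hence $c = g_1(b)$.
\end{itemize}

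\emph{Pushout case.} I would first verify that, because $\pi_1$ is surjective, the pushout of $g$ and $\pi_1$ is $C_2 = C/g(\ker \pi_1)$. In this description $\pi_2$ is the quotient map and $g_2(\pi_1(b)) = [g(b)]$. The map $g_2$ is well defined because $g(\ker \pi_1)$ has been killed. The universal property follows since any pair $u \colon C \to E$, $v \colon B_2 \to E$ with $u g = v \pi_1$ forces $u$ to vanish on $g(\ker\pi_1)$. The map $h_2$ exists because $h$ vanishes on $g(\ker\pi_1) \subseteq \mathrm{Im}\, g = \ker h$. Commutativity of the diagram is then immediate: $f_2 = \pi_1 f$, $\pi_2 g = g_2 \pi_1$, and $h_2 \pi_2 = h$.

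\begin{itemize}
\item Exactness at $B_2$: first, $g_2 f_2 = g_2 \pi_1 f = \pi_2 g f = 0$. Conversely, suppose $g_2(\pi_1 b) = 0$. Then $g(b) = g(k)$ for some $k \in \ker\pi_1$, so $b - k \in \ker g = \mathrm{Im}\, f$, say $b - k = f(a)$. Hence $\pi_1(b) = \pi_1 f(a) = f_2(a)$.
\item Exactness at $C_2$: first, $h_2 g_2 \pi_1 = h_2 \pi_2 g = h g = 0$, and $\pi_1$ is surjective, so $h_2 g_2 = 0$. Conversely, if $h_2([c]) = 0$, then $h(c) = 0$, so $c = g(b)$ and $[c] = g_2(\pi_1 b)$.
\end{itemize}

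The only step needing real care is the concrete identification of the pushout as the quotient $C/g(\ker\pi_1)$. This identification is where surjectivity of $\pi_1$ is used: it makes $g_2$ determined on all of $B_2$ and makes $\pi_2$ surjective. Once that identification is in place, every remaining verification is a one-line diagram chase.
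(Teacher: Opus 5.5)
Your proof is correct. It carries out exactly the routine verification the paper has in mind: the paper omits the proof of this proposition as a standard pullback/pushout argument left to the reader. Your concrete identifications $B_1 = g^{-1}(C_1)$ and $C_2 = C/g(\ker\pi_1)$, followed by element chases at the two middle terms, are the natural way to fill in that omitted argument.
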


 The following proposition establishes the categorical dual results for lifting exactness through pullback and pushout constructions.

\begin{proposition}[\textbf{Lifting of Exactness}] \label{exact2}
Let $R$ be a commutative ring.
\begin{enumerate}[(a)]
    \item Pushout Case: Let $i_1: B_1 \hookrightarrow B$ be an injective homomorphism of $R$-modules. Let $C$ be the pushout of $g_1: B_1 \to C_1$ and $i_1$, with induced maps $i_2: C_1 \hookrightarrow C$ and $g: B \to C$. Let $f: A \to B$ be the unique map with $i_1 \circ f_1 = f$, and $h: C \to D$ be the unique map with $h \circ i_2 = h_1$. If the top row of the following commutative diagram is exact, then the bottom row $A \xrightarrow{f} B \xrightarrow{g} C \xrightarrow{h} D$ is exact:
    $$
    \begin{tikzcd}
    A \arrow[r, "f_1"] \arrow[d, equal] & B_1 \arrow[r, "g_1"] \arrow[d, hook, "i_1"] & C_1 \arrow[r, "h_1"] \arrow[d, hook, "i_2"] & D \arrow[d, equal] \\
    A \arrow[r, "f"] & B \arrow[r, "g"] & C \arrow[r, "h"] & D
    \end{tikzcd}
    $$
    
     \item Pullback Case: Let $\pi_2: C \twoheadrightarrow C_2$ be a surjective homomorphism of $R$-modules. Let $B$ be the pullback of $g_2: B_2 \to C_2$ and $\pi_2$, with induced maps $\pi_1: B \twoheadrightarrow B_2$ and $g: B \to C$. Let $f: A \to B$ be the unique map with $\pi_1 \circ f = f_2$, and $h = h_2 \circ \pi_2$. If the bottom row of the following commutative diagram is exact, then the top row $A \xrightarrow{f} B \xrightarrow{g} C \xrightarrow{h} D$ is exact:
    $$
    \begin{tikzcd}
    A \arrow[r, "f"] \arrow[d, equal] & B \arrow[r, "g"] \arrow[d, two heads, "\pi_1"] & C \arrow[r, "h"] \arrow[d, two heads, "\pi_2"] & D \arrow[d, equal] \\
    A \arrow[r, "f_2"] & B_2 \arrow[r, "g_2"] & C_2 \arrow[r, "h_2"] & D
    \end{tikzcd}
    $$
\end{enumerate}
\end{proposition}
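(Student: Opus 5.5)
For both parts I would work with the explicit element-wise models of pushouts and pullbacks of $R$-modules and check exactness at $B$ and at $C$ by direct diagram chasing. The only real care needed is in pinning down the maps $f$ and $h$ so that the compositions vanish.

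\textbf{(a), pushout case.} I realize $C$ as $(B\oplus C_1)/\{(i_1(b_1),-g_1(b_1)) : b_1\in B_1\}$, with $g(b)=[b,0]$ and $i_2(c_1)=[0,c_1]$. Here $f=i_1\circ f_1$. The map $h$ comes from the universal property applied to the pair $(0\colon B\to D,\ h_1\colon C_1\to D)$. This pair is compatible because $h_1 g_1=0$. So $h[b,c_1]=h_1(c_1)$, and in particular $h\circ g=0$. I also check $g\circ f=g\, i_1 f_1=i_2\, g_1 f_1=0$.

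For exactness at $B$, suppose $g(b)=0$. Then $(b,0)=(i_1(b_1),-g_1(b_1))$ for some $b_1$. Hence $g_1(b_1)=0$, so $b_1=f_1(a)$ by exactness of the top row, and therefore $b=f(a)$.

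For exactness at $C$, suppose $h[b,c_1]=h_1(c_1)=0$. Then $c_1=g_1(b_1)$ for some $b_1$. Since $[0,g_1(b_1)]=[i_1(b_1),0]$, we get $[b,c_1]=g(b+i_1(b_1))$.

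The injectivity of $i_2$ is the standard fact that a pushout of a monomorphism is a monomorphism. If $[0,c_1]=0$, then $c_1=-g_1(b_1)$ with $i_1(b_1)=0$, so $b_1=0$ and $c_1=0$.

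\textbf{(b), pullback case.} I take $B=\{(b_2,c)\in B_2\oplus C : g_2(b_2)=\pi_2(c)\}$, with $\pi_1$ and $g$ the two projections. The map $\pi_1$ is surjective: since $\pi_2$ is onto, each $b_2$ has some $c$ with $\pi_2(c)=g_2(b_2)$. I set $h=h_2\pi_2$ and $f(a)=(f_2(a),0)$. This lies in $B$ because $g_2 f_2=0$, and it satisfies $\pi_1 f=f_2$ and $g f=0$. Also $h g(b_2,c)=h_2\pi_2(c)=h_2 g_2(b_2)=0$.

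For exactness at $B$, suppose $g(b_2,c)=0$. Then $c=0$, so $g_2(b_2)=0$, hence $b_2=f_2(a)$ and $(b_2,c)=f(a)$.

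For exactness at $C$, suppose $h(c)=0$. Then $h_2(\pi_2(c))=0$, so $\pi_2(c)=g_2(b_2)$ for some $b_2$. Thus $(b_2,c)\in B$, and $g(b_2,c)=c$.

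\textbf{Expected obstacle.} The step I expect to require the most care is the specification of $f$ in (b). A map $A\to B$ with $\pi_1\circ f=f_2$ is not unique, since one may add any map into $\ker\pi_1\cong\ker\pi_2$. I would therefore state explicitly that $f$ is the map induced by the pair $(f_2,0)$, that is, the unique map with $\pi_1 f=f_2$ and $g f=0$. This is what makes exactness at $B$ work. Symmetrically, in (a) $h$ must be the map induced by $(0,h_1)$.
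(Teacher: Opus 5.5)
Your proof is correct, and since the paper omits the argument as a routine diagram chase with pushouts and pullbacks, your element-wise verification in the explicit models is exactly the argument it leaves to the reader. Your normalization is genuinely necessary, not cosmetic: as literally stated, $h$ in (a) and $f$ in (b) are not unique, and a choice with $h\circ g\neq 0$ or $g\circ f\neq 0$ destroys exactness (e.g.\ in (a) take $A=B_1=C_1=0$, $B=C=D=\mathbb{Z}$, $g=h=\mathrm{id}$), so the statement must impose $h\circ g=0$ and $g\circ f=0$, as you do.
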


\begin{theorem}
The Mayer-Vietoris sequence associated with the Milnor square \eqref{milnor} admits a more general variant of the form
\begin{equation} \label{mv2}
    \cdots \rightarrow K_{i+1}(A/I) \oplus K_{i+1}(B) \rightarrow X_i \rightarrow 
    K_{i}(A) \rightarrow K_{i}(A/I) \oplus K_{i}(B) \rightarrow \cdots,
\end{equation}
where $X_i$ is uniquely characterized as the pullback of $\bar{\partial}_i: \text{sub-}K_{i+1}(B/I) \rightarrow \text{quo-}K_{i}(A)$ and $\pi_i: K_{i}(A) \rightarrow \text{quo-}K_{i}(A)$. Furthermore, $X_i$ fits into an additional exact sequence
\begin{equation}\label{addition}
    \cdots \rightarrow K_{i}(A, B, I) \rightarrow X_i \xrightarrow{\varphi_i} K_{i+1}(B/I) \rightarrow K_{i-1}(A, B, I) \rightarrow \cdots, 
\end{equation}
with $\ker\,\varphi_i = \alpha_{i}\bigl(\ker(\varepsilon_i)\bigr)$ and $\text{im}\,\varphi_i = \text{sub-}K_{i+1}(B/I)$.
\end{theorem}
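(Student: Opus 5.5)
We first establish Weibel's sequence \eqref{mv1} rigorously, and then obtain \eqref{mv2} by applying Proposition \ref{exact2}(b) to it.

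\textbf{Step 1: the excision-corrected long exact sequence.} Splice the two rows of the $K$-theory ladder for the Milnor square. Replace $K_i(A,I)$ by $K_i(A,I)/\ker(\varepsilon_i)$, which injects into $K_i(B,I)$ via $i_2$. In the top row, apply Proposition \ref{exact1}(b) with $\pi_1$ the quotient map. This gives an exact row
\[
K_{i+1}(A/I)\to K_i(A,I)/\ker\varepsilon_i\to \mathrm{quo}\text{-}K_i(A)\to K_i(A/I).
\]
Exactness at $\mathrm{quo}\text{-}K_i(A)$ needs a check: $\alpha_i(\ker\varepsilon_i)\subseteq\ker(K_i(A)\to K_i(A/I))$. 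This holds because $\alpha_i$ lands in that kernel.

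In the bottom row, apply Proposition \ref{exact1}(a) with $C_1=\mathrm{im}\,\varepsilon_i\cong K_i(A,I)/\ker\varepsilon_i$. This gives an exact row
\[
K_{i+1}(B)\to \mathrm{sub}\text{-}K_{i+1}(B/I)\to K_i(A,I)/\ker\varepsilon_i\to K_i(B).
\]
The boundary $\bar\partial_i$ is the composite $\mathrm{sub}\text{-}K_{i+1}(B/I)\to K_i(A,I)/\ker\varepsilon_i\to \mathrm{quo}\text{-}K_i(A)$.

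The sequence \eqref{mv1} then follows by the standard Barratt--Whitehead argument applied to a ladder whose middle vertical map is now an isomorphism onto its image. Concretely, we verify exactness at each of the three spots by diagram chase. At each spot we only use that the modified middle map is injective and that its image is exactly what the sub/quo constructions allow. This chase is the main obstacle: Weibel only sketched it. The delicate points are exactness at $\mathrm{quo}\text{-}K_i(A)$ and at $K_i(A/I)\oplus K_i(B)$. At those points one must lift an element of $\mathrm{im}\,\varepsilon_i$ back through $K_{i+1}(B/I)$ while keeping track of the $\ker\varepsilon_i$ ambiguity.

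\textbf{Step 2: pulling back to $X_i$.} Define $X_i = \mathrm{sub}\text{-}K_{i+1}(B/I)\times_{\mathrm{quo}\text{-}K_i(A)}K_i(A)$. We apply Proposition \ref{exact2}(b) to the window
\[
K_{i+1}(A/I)\oplus K_{i+1}(B)\to \mathrm{sub}\text{-}K_{i+1}(B/I)\to \mathrm{quo}\text{-}K_i(A)\to K_i(A/I)\oplus K_i(B),
\]
taking $C=K_i(A)$, $C_2=\mathrm{quo}\text{-}K_i(A)$ and $\pi_2=\pi_i$, which is surjective. The proposition yields exactness of
\[
K_{i+1}(A/I)\oplus K_{i+1}(B)\to X_i\to K_i(A)\to K_i(A/I)\oplus K_i(B).
\]
Exactness further along \eqref{mv2} is unchanged from \eqref{mv1} at the remaining spots, with $K_{i+1}(A)$ mapping into the sum. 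Where a neighbouring term is itself altered, we check exactness directly using that $X_i\to K_i(A)$ has image equal to the preimage of $\mathrm{im}\,\bar\partial_i$.

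\textbf{Step 3: the sequence \eqref{addition}.} Let $\varphi_i\colon X_i\to K_{i+1}(B/I)$ be the first projection followed by the inclusion $i_1$. Its image is $\mathrm{sub}\text{-}K_{i+1}(B/I)$, since $\pi_i$ is surjective, so every $s$ lifts to a pair $(s,a)$ in $X_i$.

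Its kernel consists of pairs $(0,a)$ with $\pi_i(a)=0$, that is, $a\in\alpha_i(\ker\varepsilon_i)$. This identifies $\ker\varphi_i$ with $\alpha_i(\ker\varepsilon_i)$.

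To splice with the birelative sequence, we use that $\ker\varepsilon_i$ is the image of $K_i(A,B,I)\to K_i(A,I)$ from the exact sequence $K_{i+1}(B,I)\to K_i(A,B,I)\to K_i(A,I)\to K_i(B,I)$. The map $K_i(A,B,I)\to X_i$ is $z\mapsto(0,\alpha_i(\text{image of }z))$, so its image equals $\ker\varphi_i$.

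Finally, exactness at $K_{i+1}(B/I)$ is the statement that $\mathrm{sub}\text{-}K_{i+1}(B/I)$ is the kernel of the composite $K_{i+1}(B/I)\to K_i(B,I)\to K_{i-1}(A,B,I)$. This holds because $\mathrm{im}\,\varepsilon_i=\ker(K_i(B,I)\to K_{i-1}(A,B,I))$.
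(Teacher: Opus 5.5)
Your Steps 1--2 follow the paper's route. You apply Proposition~\ref{exact1}(b) and (a), glue along $K_i(A,I)/\ker(\varepsilon_i)$, and then pull back along $\pi_i$. You also invoke the right lifting statement, Proposition~\ref{exact2}(b); the paper cites part (a), but it is (b) that matches a pullback along the surjection $\pi_i$. This gives exactness of $K_{i+1}(A/I)\oplus K_{i+1}(B)\to X_i\to K_i(A)\to K_i(A/I)\oplus K_i(B)$ at $X_i$ and $K_i(A)$. For that you only need \eqref{mv1} to be exact at $\mathrm{sub}\text{-}K_{i+1}(B/I)$ and $\mathrm{quo}\text{-}K_i(A)$, which your chase does deliver.

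Step~3 is a genuinely different route. The paper passes through the homotopy fiber $U$ and an isomorphism $\pi_{i+1}(U)\cong X_i$ claimed via the Five Lemma. You instead read off $\ker\varphi_i=\alpha_i(\ker\varepsilon_i)$, $\mathrm{im}\,\varphi_i=\mathrm{sub}\text{-}K_{i+1}(B/I)$ and the map $K_i(A,B,I)\to X_i$ directly from the pullback. This is more elementary, and it does prove exactness of \eqref{addition} at $X_i$ and at $K_{i+1}(B/I)$.

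The gap is the additional exactness you assert. In Step~1 you claim it at $K_i(A/I)\oplus K_i(B)$ (your ``delicate point''). In Step~2 you claim it at the ``remaining spots'' of \eqref{mv2}, i.e.\ at $K_{i+1}(A/I)\oplus K_{i+1}(B)$ with $K_{i+1}(A)$ mapping in. No chase can give this, because it is false in general.

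At those spots the sub/quo modifications change neither the image of the incoming map nor the kernel of the outgoing one. So the homology there is that of the naive Mayer--Vietoris sequence: pairs $(x,y)$ with $\bar f_*(x)$ and $y$ equal in $K_i(B/I)$, modulo the image of $K_i(A)$. Two things obstruct it. The boundary of $x$ in $K_{i-1}(A,I)$ is only known to lie in $\ker\varepsilon_{i-1}$. Even when that boundary vanishes, a class in $K_i(B,I)/(\mathrm{im}\,\varepsilon_i+\mathrm{im}\,\beta_i)$ remains. Barratt--Whitehead is unavailable, since $K_i(A,I)/\ker(\varepsilon_i)\to K_i(B,I)$ is merely injective.

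For a concrete failure, take $B=\mathbb{Q}[s][\epsilon]/(\epsilon^2)$, $A=\mathbb{Q}+\epsilon\mathbb{Q}[s]$ and $I=\epsilon\mathbb{Q}[s]$. Both projections split, and $K_2(B,I)\cong\Omega^1_{\mathbb{Q}[s]}$ by van der Kallen. Yet $\varepsilon_2=0$, because the Dennis--Stein generators $\langle\epsilon r,c+\epsilon r'\rangle$ of $K_2(A,I)$, with $c\in\mathbb{Q}$, go to $r\,dc=0$. So $(0,\langle\epsilon,s\rangle)$ dies in $K_2(B/I)$, hence also in $\mathrm{sub}\text{-}K_2(B/I)$ and in $X_1$. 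But it is not in the image of $K_2(A)$.

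The same example breaks \eqref{addition}. Here $\ker\varepsilon_1=0$, so $K_1(A,B,I)\cong\mathrm{coker}\,\varepsilon_2\neq0$. Its map to $X_1$ has image $\ker\varphi_1=0$, while the incoming map from $K_3(B/I)$ factors through $\beta_2=0$. So \eqref{addition} is not exact at $K_1(A,B,I)$. For the same reason the paper's identification $\pi_{i+1}(U)\cong X_i$ fails: the nonzero image of $K_1(A,B,I)$ in $\pi_2(U)$ maps to $0$ in $X_1$.

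You were right not to claim exactness at $K_i(A,B,I)$. Drop the other long-exactness claims as well, and state \eqref{mv2} and \eqref{addition} as exact at their two middle terms, in the paper's four-term sense. That, together with the kernel/image description of $\varphi_i$, is exactly what your argument proves.
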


\begin{proof}
To establish the theorem, we proceed in three key steps: first, we rigorously verify the exactness of Weibel's variant \eqref{mv1}; second, we construct the generalized Mayer-Vietoris sequence \eqref{mv2} using the pullback characterization of $X_i$; and finally, we analyze the internal structure of $X_i$ by relating it to the homotopy fiber of a suitable map.

\textbf{Step 1: Exactness of Weibel's Variant \eqref{mv1}}

We begin by proving the exactness of Weibel's variant \eqref{mv1}, which serves as a foundational step for our subsequent arguments. As noted in the introduction, $\text{quo-}K_{i}(A)$ is understood as a pushout; combining this characterization with Proposition \ref{exact1}, we obtain a commutative diagram with exact rows, as shown below:
\[
\begin{tikzcd}
    K_{i+1}(A/I) \arrow[r] \arrow[d, equal] & K_{i}(A, I) \arrow[r, "\alpha_{i}"] \arrow[d, two heads, "\pi_1"] & K_{i}(A) \arrow[r] \arrow[d, "\pi_2"] & K_{i}(A/I) \arrow[d, equal] \\
    K_{i+1}(A/I) \arrow[r] & K_{i}(A, I)/\ker(\varepsilon_i) \arrow[r] & \text{quo-}K_{i}(A) \arrow[r] & K_{i}(A/I)
\end{tikzcd}
\]

Similarly, leveraging the pullback understanding of $\text{sub-}K_{i+1}(B/I)$ together with Proposition \ref{exact1}(a), we derive another commutative diagram with exact rows:
\[
\begin{tikzcd}
    K_{i+1}(B) \arrow[r] \arrow[d, equal] & \text{sub-}K_{i+1}(B/I) \arrow[r] \arrow[d, hook, "i_1"] & K_{i}(A, I)/\ker(\varepsilon_i) \arrow[r] \arrow[d, hook, "i_2"] & K_{i}(B) \arrow[d, equal] \\
    K_{i+1}(B) \arrow[r] & K_{i+1}(B/I) \arrow[r, "\beta_{i}"] & K_{i}(B, I) \arrow[r] & K_{i}(B)
\end{tikzcd}
\]

We then glue these two diagrams along their common term $K_{i}(A, I)/\ker(\varepsilon_i)$, resulting in an extended commutative diagram with exact rows (see below). A standard diagram chase on the middle two rows of this glued diagram confirms the exactness of Weibel's variant \eqref{mv1}, completing this foundational step of the proof.
\begin{center}
\begin{tikzcd}[row sep=1.2em, column sep=1.8em]
K_{i+1}(A) \arrow[r] \arrow[d, equal] & K_{i+1}(A/I) \arrow[r] \arrow[d, equal] & K_{i}(A, I) \arrow[r, "\alpha_{i}"] \arrow[d, two heads, "\pi_1"] & K_{i}(A) \arrow[r] \arrow[d, "\pi_2"] & K_{i}(A/I) \arrow[d, equal] \\
K_{i+1}(A) \arrow[r] \arrow[d, dashed, "f_{*}"] & K_{i+1}(A/I) \arrow[r] \arrow[d, dashed, "h_1"] & K_{i}(A,I)/\ker(\varepsilon_i) \arrow[r] \arrow[d, equal] & \text{quo-}K_{i}(A) \arrow[r] \arrow[d, dashed, "h_2"] & K_{i}(A/I) \arrow[d, dashed, "\bar{f}_{*}"] \\ 
K_{i+1}(B) \arrow[r] \arrow[d, equal] & \text{sub-}K_{i+1}(B/I) \arrow[r] \arrow[d, hook, "i_1"] & K_{i}(A,I)/\ker(\varepsilon_i) \arrow[r] \arrow[d, hook, "i_2"] & K_{i}(B) \arrow[r] \arrow[d, equal] & K_i(B/I) \arrow[d, equal] \\
K_{i+1}(B) \arrow[r] & K_{i+1}(B/I) \arrow[r, "\beta_{i}"] & K_{i}(B,I) \arrow[r] & K_{i}(B) \arrow[r] & K_i(B/I)
\end{tikzcd}
\end{center}

\vspace{0.5em}

\textbf{Step 2: Construction of the Generalized Mayer-Vietoris Sequence \eqref{mv2}}

Next, we define $X_i$ as the pullback of the homomorphisms $\bar{\partial}_i: \text{sub-}K_{i+1}(B/I) \rightarrow \text{quo-}K_{i}(A)$ and $\pi_i: K_{i}(A) \rightarrow \text{quo-}K_{i}(A)$, which uniquely characterizes $X_i$ as stated in the theorem. By Proposition \ref{exact2}(a), $X_i$ fits into the top row of the following commutative diagram:
\begin{center}
\begin{tikzcd}
    K_{i+1}(A/I) \oplus K_{i+1}(B) \arrow[r] \arrow[d] & X_i \arrow[r] \arrow[d] & K_{i}(A) \arrow[r] \arrow[d] & K_{i}(A/I) \oplus K_{i+1}(B) \arrow[d] \\
    K_{i+1}(A/I) \oplus K_{i+1}(B) \arrow[r] & \text{sub-}K_{i+1}(B/I) \arrow[r] & \text{quo-}K_{i}(A) \arrow[r] & K_{i}(A/I) \oplus K_{i}(B)
\end{tikzcd}
\end{center}

Since the bottom row of this diagram is exact (by construction and the results of Step 1), the exactness of the top row follows directly. This top row is precisely the generalized Mayer-Vietoris sequence \eqref{mv2} we aim to establish.

\textbf{Step 3: Internal Structure of $X_i$}

To characterize the internal structure of $X_i$ and derive the additional exact sequence involving $K_{i}(A, B, I)$, we follow the approach outlined in \cite[Theorem 2.1]{1988MV} — it is worth noting that this part essentially builds on the pullback conclusion of $X_i$ established in Step 2, but supplements new perspectives by introducing the homotopy fiber $U$ of the map $K(f\times \text{pr})$ to deepen the analysis. Recall that $K(A) = BQ\mathcal{P}(A)$ (with the standard modification for $i=0$) denotes the algebraic $K$-theory space, where $\pi_{i+1}(K(A)) = K_i(A)$ for all $i \ge 0$. We first consider the following commutative diagram, in which every row and column is a quasi-fibration sequence:
\begin{center}
\begin{tikzcd}[row sep=large, column sep=large]
K(A, B, I) \arrow[r] \arrow[d] & U \arrow[r] \arrow[d] & \Omega K(B / I) \arrow[d]\\
K(A, I) \arrow[r] \arrow[d] & K(A) \arrow[r] \arrow[d, "K(f\times \text{pr})"] & K(A / I) \arrow[d, "K(\bar{f}\times \text{id})"] \\
K(B, I) \arrow[r] & K(B \times A / I) \arrow[r, "K(\bar{f}\times \text{id})"] & K(B / I \times A / I)
\end{tikzcd}
\end{center}

The top quasi-fibration sequence induces a long exact sequence of homotopy groups:
\begin{equation*} 
    \cdots \rightarrow K_{i}(A, B, I) \rightarrow \pi_{i+1}(U) \rightarrow K_{i+1}(B/I) \rightarrow K_{i-1}(A, B, I) \rightarrow \cdots.
\end{equation*}

By definition of $\text{sub-}K_{i+1}(B/I)$, we have:
$$ \text{sub-}K_{i+1}(B/I) = \ker\left(K_{i+1}(B/I) \rightarrow K_{i-1}(A, B, I)\right) = \text{im}\left(\pi_{i+1}(U) \to K_{i+1}(B/I)\right).$$

To relate $\pi_{i+1}(U)$ to $X_i$, we construct the following commutative diagram:
\begin{equation*}
\begin{tikzcd}[row sep=large, column sep=large]
K_{i}(A, B, I) \arrow[r] \arrow[d] & \pi_{i+1}(U) \arrow[r, two heads] \arrow[d] & \text{sub-}K_{i+1}(B/I) \arrow[r, hook] \arrow[d, "\bar{\partial}"] & K_{i+1}(B, I) \arrow[d, "0"]\\
\alpha_{i}(\ker(\varepsilon_i)) \arrow[r, hook] & K_{i}(A) \arrow[r, two heads] & \text{quo-}K_{i}(A) \arrow[r] & K_{i}(A / I) 
\end{tikzcd}
\end{equation*}

Here, by the universal property of the pullback $X_i$, there exists a unique homomorphism $\pi_{i+1}(U) \rightarrow X_i$. Since both $\pi_{i+1}(U)$ and $X_i$ fit into the same exact sequence \eqref{mv2}, the Five Lemma implies that this homomorphism is an isomorphism, i.e., $\pi_{i+1}(U) \cong X_i$. This further confirms that the analysis of $X_i$'s internal structure is rooted in its pullback characterization, with the homotopy fiber serving as a supplementary tool.

Finally, the pullback property of $X_i$ induces an isomorphism between the kernels of the corresponding homomorphisms. Specifically, this yields $\ker\,\varphi_i \cong \alpha_{i}(\ker(\varepsilon_i))$, where $\varphi_i$ is defined in \eqref{addition}.  Combining this with the earlier result that $\text{im}\,\varphi_i = \text{sub-}K_{i+1}(B/I)$, we complete the proof of the theorem.
\end{proof}


\section*{Disclosure statement}
The author declares that there are no competing interests of a financial or personal nature.

\bibliography{mybibfile}
\end{document}